\numberwithin{equation}{section}
\def\R{\mathbb R}
\def\Om{\Omega}
\def\pa{\partial}
\def\00{{\bf 0}}
\newtheorem*{theorem*}{Theorem}
\newtheorem{theorem}{Theorem}[section]
\newtheorem{proposition}[theorem]{Proposition}
\newtheorem{corollary}[theorem]{Corollary}
\newtheorem{remark}[theorem]{Remark}
\title[]{A rigidity problem on the round sphere}
\author{Giulio Ciraolo and Luigi Vezzoni}
\address{(Luigi Vezzoni) Dipartimento di Matematica G. Peano \\ Universit\`a di Torino\\
Via Carlo Alberto 10\\
10123 Torino\\ Italy}
 \email{luigi.vezzoni@unito.it}
 \address{(Giulio Ciraolo) Dipartimento di Matematica e Informatica \\ Universit\`a di Palermo\\ Via Archirafi 34\\ 90123 Palermo\\ Italy}
\email{giulio.ciraolo@unipa.it}
\thanks{This work was partially supported by the project PRIN \lq\lq {\em Variet\`a reali e complesse: geometria, topologia e analisi armonica}\rq\rq ,  the projects FIRB \lq\lq {\em Differential Geometry and Geometric functions theory}\rq\rq and \lq\lq {\em Geometrical and Qualitative aspects of PDE}\rq\rq, and GNSAGA and GNAMPA (INdAM) of Italy.\\
}
\keywords{Overdetermined PDE, Rotationally symmetric spaces, Rigidity.}
    \subjclass{Primary 35R01, 35N25; Secondary: 53C24, 58J05.}
\begin{document}
\begin{abstract}
We consider a class of overdetermined problems in rotationally symmetric spaces, which reduce to the classical Serrin's overdetermined problem in the case of the Euclidean space. We prove some general integral identities for rotationally symmetric spaces which imply a rigidity result in the case of the round sphere.  
\end{abstract}

\maketitle

\section{Introduction and statement of the result}
We consider a class of overdetermined problems in rotationally symmetric spaces, which consists of an elliptic boundary value problem where Dirichlet and Neumann conditions are simultaneously imposed on the boundary. These kinds of problems are usually not well-posed and the existence of a solution imposes strong restrictions on the shape of the domain where the problem is defined. 

The study of overdetermined problems in the Euclidean space starts with the seminal paper of Serrin \cite{Se}, where solutions of the following overdetermined torsion problem 
\begin{equation} \label{pb_serrin1}
\begin{cases}
\Delta u = f(u) & \textmd{in } \Omega \,, \\
u= 0 & \textmd{on } \partial \Omega \,, \\
u_\nu = c &\textmd{on } \partial \Omega \,,
\end{cases}
\end{equation}
are considered. Here, $\Omega$ is a bounded domain, $c$ is a positive constant and $f\colon \R\to \R$ is a $C^1$-function. By using the method of the moving planes, Serrin proved that if \eqref{pb_serrin1} admits a positive (or negative) solution then $\Omega$ is a ball and the solution $u$ is radially symmetric. The result extends to more general quasilinear elliptic problems, as proved in \cite{Se,Re}.


In \cite{We} Weinberger observed that the torsion problem 
\begin{equation} \label{pb_serrin_W}
\begin{cases}
\Delta u = n & \textmd{in } \Omega \,, \\
u= 0 & \textmd{on } \partial \Omega \,, \\
u_\nu = c  &\textmd{on } \partial \Omega \,,
\end{cases}
\end{equation}
can be treated by using a simplified approach, based only on basic integral inequalities and a maximum principle for the so-called $P$-function. Notice that in this case $u<0$ in $\Omega$ in view of the maximum principle.
From the geometric point of view, the case $f\equiv n$ is special since the Euclidean metric is the Hessian metric of the radial function  $\varphi(x)=\frac{|x|^2}{2}$
and equation $\Delta u = n$ is equivalent to
$$
{\rm tr}(\nabla^2u)={\rm tr}(\nabla^2\varphi)\,,
$$ 
where $\nabla^2$ is the Hessian. With this approach, Weinberger reduced the problem to solve the equation
$$
\nabla^2 u=\nabla^2\varphi\,,
$$
instead of $\Delta u=n$ simplifying  the original problem and proving the radial symmetry by using the boundary condition $u=0$ on $\partial \Omega$. Weinberger's approach was used in \cite{FK,FGK,GL} to extend the symmetry result to more general elliptic equations.


Still by using integral identities, Serrin's theorem was extended to Hessian equations in \cite{BNST}. The proof is in the same fashion as the one of Weinberger, but without using any $P$-function.
Serrin's result was extended to the round sphere $(S^n,g_{S^n})$ in \cite{KP} and \cite{Mo} by using a moving planes approach.

The aim of the present paper is to prove a Serrin's type result on the round sphere without using the moving plane method, but employing basic integral inequalities in the same spirit as \cite{BNST,We}.  A natural obstruction in using Weinberger's approach is that the round metric on the sphere is not a Hessian metric.  
However, this approach is still successful when one replaces $\Delta u=n$ with $\Delta u=\psi$, $\psi$ being a radial function from the north pole, since it turns out that the metric $\psi\,g_{S^{n}}$ is the Hessian of a function $\varphi$ on the Hemisphere $S^n_+$ for a suitable choice of $\psi$.

Indeed, by using polar coordinates, a rotationally symmetric space can be identified with
an open ball $B$ of $\R^n$ equipped by a rotationally symmetric metric $g=dr^2 + h^2(r) g_{S^{n-1}}$. For instance, $h(r)=r$ in the Euclidean case, 
$h(r)=\sin r$ in the case of the sphere and $h(r)=\sinh r$ in the case of the hyperbolic space. By using this notation, problem \eqref{pb_serrin1} can be written in terms of $h$ as 
\begin{equation}\label{pb_serrin_h}
\begin{cases}
\Delta u = n \dot h & \textmd{ in } \Omega \,, \\
u=0 & \textmd{ on } \partial \Omega  \,, \\
u_\nu=c & \textmd{ on } \partial \Omega  \,. \\
\end{cases}
\end{equation}
Our goal is to provide a rigidity result for \eqref{pb_serrin_h} which makes use of integral identities. As far as we know, this approach has been successfully employed in the Euclidean space, but it is new for overdetermined problems in more general Riemannian manifolds. 
Our main result is the following.

\begin{theorem} \label{main2}
Let $\Omega$ be a domain in the round sphere $(S^n,g_{S^n})$, $n \geq 2$, whose closure is contained in the Hemisphere $S^n_+$.  Assume that the following problem has a solution 
\begin{equation} \label{pb_serrin}
\begin{cases}
\Delta u = n \cos(d) & \textmd{in } \Omega \,, \\
u = 0 & \textmd{on } \partial \Omega \,, \\
\partial_{\nu}u = c & \textmd{on } \partial \Omega \,,
\end{cases}
\end{equation}
where $d\colon S^n_+\to \R$ is the geodesic distance from the north pole and $c$ is a nonzero constant. Then $\Omega$ is a geodesic ball about the north pole and $u$ is a radial function. 
\end{theorem}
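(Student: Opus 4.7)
The plan is to upgrade the scalar equation $\Delta u=n\cos d$ to the tensor identity $\nabla^2 u=\cos(d)\,g_{S^n}$, and then to conclude via a parallel-vector-field argument on the sphere.

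First I would observe that $\varphi:=1-\cos d$ satisfies $\nabla^2\varphi=\cos(d)\,g_{S^n}$ on the hemisphere $S^n_+$ (a direct check in geodesic polar coordinates $g_{S^n}=dr^2+\sin^2r\,g_{S^{n-1}}$, since $\varphi'(r)=\sin r$, $\varphi''(r)=\cos r$, and $\varphi'(r)\sin r\cos r=\cos r\sin^2 r$). In particular $\Delta\varphi=n\cos d=\Delta u$, so $w:=u-\varphi$ is harmonic on $\Omega$, and the trace-free $2$-tensor $T:=\nabla^2 u-\cos(d)\,g_{S^n}$ coincides with $\nabla^2 w$. If I can show $T\equiv 0$, then $\nabla w$ is a parallel vector field on $\Omega$; since the Riemann tensor of $S^n$ satisfies $R(X,Y)V=g(Y,V)X-g(X,V)Y$, the identity $R(\cdot,\cdot)\nabla w=0$ forces $\nabla w=0$. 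Hence $w$ is constant on $\Omega$, and the boundary condition $u|_{\partial\Omega}=0$ gives $\varphi|_{\partial\Omega}=\text{const}$, i.e., $d|_{\partial\Omega}$ is constant. Thus $\Omega$ is a geodesic ball about the north pole and $u$ is radial.

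The heart of the argument is an integral identity for $T$. Using the commutation formula on $S^n$ with $R_{ij}=(n-1)g_{ij}$, I compute $\mathrm{div}\,T=(n-1)\nabla w$; and since $T$ is trace-free, $T:\nabla^2 u=T:T=|T|^2$. Therefore $\mathrm{div}(T(\nabla u))=(n-1)\langle\nabla w,\nabla u\rangle+|T|^2$. Integrating over $\Omega$, noting that $\int_\Omega\langle\nabla w,\nabla u\rangle\,dV=0$ follows from $\Delta w=0$ and $u|_{\partial\Omega}=0$, and evaluating the boundary term using $\nabla u|_{\partial\Omega}=c\nu$ and $u_{\nu\nu}=\Delta u-Hu_\nu=n\cos d-cH$ on $\partial\Omega$ (from the tangential vanishing of $u$), I obtain the key identity
\[
\int_\Omega|T|^2\,dV \;=\; c\int_{\partial\Omega}\bigl[(n-1)\cos d-cH\bigr]\,dA.
\]

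Since $|T|^2\ge 0$ pointwise, this immediately yields the one-sided inequality $(n-1)\int_{\partial\Omega}\cos d\,dA\ge c\int_{\partial\Omega}H\,dA$. The main obstacle is to derive the reverse inequality, so that $\int_\Omega|T|^2=0$ and hence $T\equiv 0$. For this I would combine the identity above with the spherical Minkowski relation $(n-1)\int_{\partial\Omega}\cos d\,dA=\int_{\partial\Omega}\varphi_\nu H\,dA$ (coming from the closed conformal vector field $\nabla\varphi$ and tangential integration on $\partial\Omega$), the divergence-theorem consequence of the equation $c|\partial\Omega|=n\int_\Omega\cos d\,dV$, and Brendle's spherical Heintze--Karcher inequality $(n-1)\int_{\partial\Omega}\cos d/H\,dA\ge n\int_\Omega\cos d\,dV$, chained via a Cauchy--Schwarz step on the boundary. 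This closing inequality is the spherical counterpart of the delicate Weinberger/BNST boundary estimate, and is precisely where the positive sectional curvature of $S^n$ is used to drive the rigidity.
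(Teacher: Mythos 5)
The first half of your argument is correct and is a genuinely different route from the paper's: with $\varphi=1-\cos d$ one has $\nabla^2\varphi=\cos(d)\,g_{S^n}$, the tensor $T=\nabla^2u-\cos(d)\,g_{S^n}=\nabla^2 w$ is trace free, $\mathrm{div}\,T=(n-1)\nabla w$, and your Reilly-type identity $\int_\Omega|T|^2=c\int_{\partial\Omega}\bigl[(n-1)\cos d-cH\bigr]$ checks out, as does the endgame ($T\equiv0$ forces $\nabla w$ parallel, hence zero by the curvature of $S^n$, hence $d$ constant on $\partial\Omega$). The paper never introduces the mean curvature: it gets rigidity from the Bochner--Weitzenb\"ock formula plus Newton's inequality weighted by $-u$, combined with a Poho\v{z}aev-type identity in rotationally symmetric spaces and the elementary estimates $\int_\Omega-u\,|Du|^2=\int_\Omega-u\,h\,u_r$ and $\int_\Omega-u\,(|Du|^2-h^2)\le0$.

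The gap is the closing step, which is exactly where the paper's real work lies. From $|T|^2\ge0$ your identity gives $(n-1)\int_{\partial\Omega}\cos d\ge c\int_{\partial\Omega}H$; to conclude you must prove the reverse inequality $c\int_{\partial\Omega}H\ge(n-1)\int_{\partial\Omega}\cos d$, and the tools you list cannot produce it. Brendle's Heintze--Karcher inequality bounds $\int_{\partial\Omega}\cos d/H$ from \emph{below}, whereas any Cauchy--Schwarz pairing that lower-bounds $\int_{\partial\Omega}H$ (e.g.\ $(\int_{\partial\Omega}\cos d)^2\le\int_{\partial\Omega}\cos^2 d/H\cdot\int_{\partial\Omega}H$) requires an \emph{upper} bound on an $\int\cos d/H$-type quantity; so Heintze--Karcher, together with $c|\partial\Omega|=n\int_\Omega\cos d$ and the Minkowski formula, points in the same direction as the inequality you already have and forces no equality. (In the Euclidean model the same scheme visibly fails: Reilly gives $c\int_{\partial\Omega}H\le(n-1)|\partial\Omega|$ and Heintze--Karcher gives $\int_{\partial\Omega}1/H\ge\frac{n}{n-1}|\Omega|=\frac{c}{n-1}|\partial\Omega|$, both compatible with non-balls.) In addition, Brendle's inequality needs $H>0$ on $\partial\Omega$, which you have not established. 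To close the argument you would need a substitute for the paper's Poho\v{z}aev-type divergence identity $\mathrm{div}\bigl(\tfrac{|Du|^2}{2}h\partial_r-h\,u_r\,Du\bigr)=\tfrac{n-2}{2}\dot h|Du|^2-h\,u_r\,\Delta u$ (or an equivalent $P$-function argument), which is what actually supplies the reverse inequality there; as written, your proposal does not close.
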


It is worth to be remarked that equation $\Delta u = n \cos(d) $ can be alternatively written in the $2$-dimensional hemisphere as 
$$
\tilde \Delta u = 2\,,
$$
where $\tilde \Delta$ is the Laplace operator of $\tilde g=\cos(d)\, g_{S^{2}}$. Therefore we have the following direct consequence of theorem \ref{main2}.

\begin{corollary}
Let $\Omega$ be a domain in the $2$-dimesional round sphere whose closure is contained in the Hemisphere $S^2_+$.  Let 
$\tilde g=\cos(d) g_{S^{2}}$ be the conformal change of round metric in the Hemisphere obtained by using the geodesic distance $d$ from the north pole, and let $\tilde\Delta$ be the Lapalce operator of $\tilde g$. Then  the problem  
\begin{equation*}
\begin{cases}
\tilde \Delta u = 2 & \textmd{in } \Omega \,, \\
u = 0 & \textmd{on } \partial \Omega \,, \\
\partial_{\nu}u = c & \textmd{on } \partial \Omega \,,
\end{cases}
\end{equation*}
has a solution if and only if $\Omega$ is a geodesic ball about the north pole and $u$ is a radial function. 
\end{corollary}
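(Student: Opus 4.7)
The proof will mimic the Weinberger and Brandolini--Nitsch--Salani--Trombetti strategy, transplanted to the sphere via the observation mentioned in the introduction that $\varphi:=1-\cos d$ is a Hessian potential. A quick ODE check in polar coordinates (using that for a radial $\varphi$ the Hessian in $g=dr^{2}+\sin^{2}r\,g_{S^{n-1}}$ is $\ddot\varphi\,dr\otimes dr+\dot\varphi\sin r\cos r\,g_{S^{n-1}}$, and choosing $\dot\varphi=\sin r$) yields
$$\nabla^{2}\varphi=\cos(d)\,g_{S^{n}},\qquad\Delta\varphi=n\cos d,$$
so $w:=u-\varphi$ is harmonic in $\Omega$. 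Since $|\nabla^{2}u-\cos(d)g|^{2}=|\nabla^{2}u|^{2}-(\Delta u)^{2}/n$, the goal reduces to proving equality in the Cauchy--Schwarz inequality $\int_{\Omega}|\nabla^{2}u|^{2}\geq\frac{1}{n}\int_{\Omega}(\Delta u)^{2}$, which forces $\nabla^{2}u=\nabla^{2}\varphi$ pointwise.

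The first main step is to plug Reilly's identity (valid because $u=0$, $u_{\nu}\equiv c$ on $\partial\Omega$ and $\mathrm{Ric}_{S^{n}}=(n-1)g$),
$$\int_{\Omega}|\nabla^{2}u|^{2}\,dV=\int_{\Omega}(\Delta u)^{2}\,dV-(n-1)\int_{\Omega}|\nabla u|^{2}\,dV-c^{2}\int_{\partial\Omega}H\,dA,$$
into the Cauchy--Schwarz defect, and clean it up using the two elementary identities $\int_{\Omega}|\nabla u|^{2}=-n\int_{\Omega}u\cos d$ (multiply the PDE by $u$) and $n\int_{\Omega}(u+\cos d)\cos d\,dV=c\int_{\partial\Omega}\cos d\,dA$ (Green's identity applied to $u,\varphi$, exploiting $\Delta u=\Delta\varphi$). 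The outcome is the clean identity
$$\int_{\Omega}|\nabla^{2}u-\nabla^{2}\varphi|^{2}\,dV\;=\;c\int_{\partial\Omega}\bigl[(n-1)\cos d-cH\bigr]\,dA.\qquad(\star)$$
Positivity of the left-hand side of $(\star)$ immediately gives one direction: $(n-1)\int_{\partial\Omega}\cos d\geq c\int_{\partial\Omega}H$.

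The second main step is to produce the reverse integral inequality $c\int_{\partial\Omega}H\geq(n-1)\int_{\partial\Omega}\cos d$ from pure geometry. I would combine three ingredients: the Minkowski-type identity for the conformal vector field $\nabla\varphi$ (which gives $(n-1)\int_{\partial\Omega}\cos d=\int_{\partial\Omega}H\varphi_{\nu}$), Brendle's Heintze--Karcher inequality for the hemisphere
$$(n-1)\int_{\partial\Omega}\frac{\cos d}{H}\,dA\;\geq\;n\int_{\Omega}\cos d\,dV=c|\partial\Omega|,$$
and Cauchy--Schwarz on $\partial\Omega$ to bridge $\int H\cdot\int\cos d/H$ with $(\int\cos d)^{2}$. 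Together with $(\star)$ this forces equality, hence $\nabla^{2}u=\cos(d)\,g=\nabla^{2}\varphi$ pointwise in $\Omega$.

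Finally, from $\nabla^{2}(u-\varphi)\equiv 0$ the vector field $\nabla(u-\varphi)$ is parallel on the connected open set $\Omega\subset S^{n}$; the holonomy group of $S^{n}$ for $n\geq 2$ is $\mathrm{SO}(n)$, which fixes no nonzero vector, so $u-\varphi$ is constant on $\Omega$. The boundary datum $u\equiv 0$ then forces $\varphi$ (equivalently $d$) to be constant on $\partial\Omega$, i.e.\ $\partial\Omega=\{d=R\}$ for some $R\in(0,\pi/2)$, and $u=\cos R-\cos d$ is radial. The hard part is squarely the second main step: checking that Brendle's hemisphere Heintze--Karcher inequality (which also needs a preliminary argument that $H>0$ on $\partial\Omega$, coming from the Serrin-type boundary regularity) can be combined cleanly with the Minkowski identity to deliver the reverse inequality; this is the geometric content that the earlier integral identities of the paper must enshrine.
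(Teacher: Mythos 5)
Your first step is sound: with $\varphi=1-\cos d$ one indeed has $\nabla^{2}\varphi=\cos(d)\,g_{S^{n}}$, and the computation leading to
$\int_{\Omega}|\nabla^{2}u-\nabla^{2}\varphi|^{2}=c\int_{\partial\Omega}\bigl[(n-1)\cos d-cH\bigr]$
via Reilly's formula, $\int_{\Omega}|\nabla u|^{2}=-n\int_{\Omega}u\cos d$ and Green's identity is correct, and it gives $(n-1)\int_{\partial\Omega}\cos d\geq c\int_{\partial\Omega}H$. The genuine gap is exactly where you flag it: the second step does not close. To force equality you need the reverse bound $c\int_{\partial\Omega}H\geq(n-1)\int_{\partial\Omega}\cos d$, i.e.\ a \emph{lower} bound on $\int_{\partial\Omega}H$, but all your geometric ingredients point the wrong way. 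Brendle's inequality gives a \emph{lower} bound on $\int_{\partial\Omega}\cos d/H$, while any Cauchy--Schwarz bridge of the form $\left(\int\cos d\right)^{2}\leq\int\frac{\cos d}{H}\int\cos d\,H$ would require an \emph{upper} bound on $\int\cos d/H$ to produce a lower bound on $\int H\cos d$; combining your three ingredients only yields trivialities such as $\int_{\partial\Omega}\cos d\leq|\partial\Omega|$. This is precisely the weight mismatch that makes the Euclidean Ros-type chain ($|\partial\Omega|^{2}\leq\int H^{-1}\int H$ together with $c|\partial\Omega|=n|\Omega|$) collapse to an identity in $\R^n$ but not on the sphere: here $\int H$, $\int H\cos d$ and $\int H/\cos d$ are genuinely different quantities. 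Likewise, rewriting the goal via the Minkowski identity as $\int_{\partial\Omega}H\,(c-\varphi_{\nu})\geq0$ only shifts the problem: $c-\varphi_{\nu}=\partial_\nu(u-\varphi)$ is the normal derivative of a harmonic function with zero mean on $\partial\Omega$, and no sign is available. Finally, the mean-convexity $H>0$ needed even to state Brendle's inequality and your Cauchy--Schwarz steps is asserted but not proved.

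For comparison, the paper never touches the mean curvature, Reilly's formula or Heintze--Karcher. The corollary is deduced from Theorem \ref{main2} through the two-dimensional conformal covariance $\tilde\Delta u=(\cos d)^{-1}\Delta u$ (a reduction your proposal skips entirely, together with the easy ``if'' direction), and Theorem \ref{main2} itself is proved by purely interior integral identities: the Poho\v{z}aev-type identity \eqref{identity1} and the Bochner formula yield \eqref{BW2} and \eqref{left2}, and the reverse inequality which forces equality comes from the elementary estimate $\int_{\Omega}-u\bigl(|Du|^{2}-h^{2}\bigr)\leq0$ of Proposition \ref{lemma_pino}, obtained from $u_{r}^{2}\leq|Du|^{2}$ and the arithmetic--geometric mean inequality, with the weight $-u\geq0$. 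If you want to salvage your route, you would need a weighted (substatic) Reilly formula with potential $\cos d$ so that the boundary term carries the weight $\cos d$ and the chain closes with matching weights; with the unweighted Reilly formula plus Brendle and Cauchy--Schwarz as stated, the key inequality is not obtainable. Your closing argument (parallel $\nabla(u-\varphi)$ must vanish on the sphere, hence $u-\varphi$ constant and $\partial\Omega$ a level set of $d$) is fine and matches the paper's conclusion.
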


Our approach for proving Theorem \ref{main2} consists in providing some general integral identities in rotationally symmetric spaces which in the case of the sphere imply the result. 
We notice that in the Euclidean case $\dot h(r)=1$ and problem \eqref{pb_serrin_h} is invariant under translations. On the other hand, in the Hemisphere one has $\dot h(r) = \cos r$ and it is clear that the origin is a distinguished point. However, in spite of this different behaviour, our proof works in both cases.
We mention that our proof works also in the Euclidean space, where it is a somehow simplified version of the one in \cite{BNST} (see Corollary \ref{corol_Euclide} below).

The paper is organized as follows. In section \ref{sect_notation} we introduce some notation. In section \ref{section2} we prove a Poho\v{z}aev type identity in  rotationally symmetric spaces and in section \ref{section3}, starting from the Bochner-Weitzenb\"ock formula on Riemannian manifolds, we deduce some general identities involving solutions to \eqref{pb_serrin_h} in an arbitrary rotationally symmetric space. Moreover, we use the basic inequality 
$$
|\nabla^2u| \geq \frac{1}{n} (\Delta u)^2 
$$
to obtain a chain of integral inequalities where the equality sign is achieved if and only if the solution $u$ satisfies $\nabla^2u = \dot h g_{S^n}$.

\medskip
\noindent
 {\em{Acknowledgements}}. The authors are grateful to Antonio J. Di Scala for useful conversations about this paper.

\section{Notation} \label{sect_notation}

In this short section we declare some notation we are going to adopt in the following. Even many of them are standard, we prefer to declare them for the reader's convenience. 

Given a Riemannian manifold $(M,g)$, we denote by $\nabla$ the Levi-Civita connection of $g$ and by $\Gamma_{ij}^k$ the Christoffel symbols of $\nabla$. If $L$ is any tensor on $M$, we denote by $|L|$ the pointwise norm of $L$ with respect to $g$.
Given a $C^2$-map $u\colon M\to \R$, we denote by $Du$ the gradient of $u$, i.e. the dual field of the differential of $u$ with respect to $g$, and  by $\nabla^2u=\nabla du $ the Hessian of $u$. By definition $\nabla^2$ is a symmetric $2$-tensor whose components in local coordinates take the follow expression 
$$
(D^{2}u)_{ij}=u_{ij}-\Gamma_{ij}^ku_k\,. 
$$
We denote by $\Delta$ the Laplacian operator induced by $g$. $\Delta u$ can be defined as the trace of $\nabla^2u$ with respect to $g$ and, accordingly, it takes the following local expression
$$
\Delta u=g^{ij}u_{ij}-g^{ij}\Gamma_{ij}^ku_k\,,
$$
where $g_{ij}$ are the components of the metric $g$ and $(g^{ij})$ is the inverse matrix to $(g_{ij})$. 

Every $C^2$-map $u$ on a Riemannian manifold always satisfies the so-called  Bochner-Weitzenb\"ock formula
\begin{equation}\label{BW}
\frac12 \Delta|Du|^2=|\nabla^2u|^2+g(D(\Delta u),Du)+Ric(Du,Du)\,, 
\end{equation}
where $Ric$ is the Ricci tensor of $g$.

Given a vector field $X$ on an oriented Riemannian manifold $(M,g)$, we denote by ${\rm div}X$ the divergence of $X$ with respect to $g$. 
If ${e_k}$ is a local orthonormal frame on $(M,g)$, then
$$
{\rm div}X=_{|{\rm loc}}\sum_{k=1}^ng(\nabla_{e_k}X,e_k)\,;
$$
notice that, if $u$ is a $C^1$-map on $M$, we have 
$$
{\rm div}(uX)=g(Du,X)+u{\rm div}(X) \,.
$$ 
For a $C^1$ vector field $X$, the divergence theorem is given by
$$
\int_\Omega {\rm div} X = \int_{\partial \Omega} g(X,\nu) \,,
$$ 
where $\nu$ is the outward normal to $\Omega$ and $\Omega$ is a bounded domain regular enough. Here and in the rest of the paper, all the integrations are computed with respect to the volume form of $g$, which will be omitted.  

In this paper we mainly focus on rotationally symmetric spaces. By a {\em rotationally symmetric space} we refer to an open ball $B$ of $\R^n$ centered  at the origin $O$ (including the case $B=\R^n$) equipped by a Riemannian metric $g$ which writes in polar coordinates as  
$$
g=dr^2+h^2g_{S^{n-1}} \,,
$$
where $g_{S^{n-1}}$ is the round metric of the $(n-1)$-dimensional sphere $S^{n-1}$ and $h:[0,\bar r), \to [0,+\infty)$ is a smooth map such that $h(0)=0$ and $\dot h >0$, $\bar r$ being the radius of $B$. For instance, the Euclidean space and the Hemisphere belong to this class of examples.  Notice that if $f\colon B\to \R$ is a radial function, then its Hessian takes the following expression 
$$
\nabla^2f=\ddot f\,dr^2+\dot f h\dot h\,g_{S^{n-1}}\,.
$$
In particular if we consider a radial function $H$ which is a primitive of $h$ we have 
\begin{equation}\label{H}
\nabla^2H=\dot h\,g\,.
\end{equation}

\section{A Poho\v{z}aev-type identity in rotationally symmetric spaces}\label{section2}
%
%
%
%

A crucial tool for proving Serrin's result in \cite{BNST} and \cite{We} is the Poho\v{z}aev identity, which in the Euclidean space is given by
\begin{equation} \label{Pohozaev}
\frac{n-2}{2} \int_\Om |Du|^2 - \int_\Om (x \cdot D u) \Delta u = \frac{1}{2} \int_{\pa \Om} |Du|^2 (x \cdot \nu) - \int_{\pa \Om} (x \cdot Du) \pa_{\nu} u  \,,
\end{equation}
where $u\colon\bar \Omega\to \mathbb R$ is a function of class $C^2$ (see \cite{paul}). Our symmetry result will be based on the following generalization of the Poho\v{z}aev identity to rotationally symmetric spaces.

\begin{proposition} 
Let $(B,g=dr^2+h^2g_{S^{n-1}})$ be an $n$-dimensional rotationally symmetric space and let $X=h\partial_r$. Then  
\begin{equation}\label{identity1}
{\rm div} \left(\frac{|D u|^2}{2}X-hu_r D u\right)=\frac{n-2}{2}\dot h|D u|^2-hu_r\Delta u \,,
\end{equation}
for every map $u\colon B\to \mathbb R$ of class $C^2$. 
\end{proposition}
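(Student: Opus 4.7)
The plan is to compute the divergence of $V := \tfrac{|Du|^2}{2}X - h u_r Du$ directly, exploiting the fact that $X=h\partial_r$ is (up to a factor) a conformal/homothetic vector field. Specifically, if $H$ is a radial primitive of $h$ then $DH=h\partial_r=X$, so identity \eqref{H} says $\nabla^2 H=\dot h\, g$, which translates into
$$
\nabla_Y X=\dot h\, Y \quad\text{for every vector field } Y,
$$
and in particular ${\rm div}\,X={\rm tr}(\nabla X)=n\dot h$. I would first record this and rewrite the second term of $V$ using $h u_r=g(Du,\partial_r)h=g(Du,X)$, so that $V=\tfrac{|Du|^2}{2}X-g(Du,X)\,Du$; this form makes the roles of $X$ and $Du$ symmetric and lines the calculation up with the classical Pohožaev vector field.

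Next I would apply the product rule ${\rm div}(fY)=g(Df,Y)+f\,{\rm div}\,Y$ to each piece. For the first piece,
$$
{\rm div}\!\left(\tfrac{|Du|^2}{2}X\right)=\tfrac{1}{2}X(|Du|^2)+\tfrac{n}{2}\dot h\,|Du|^2,
$$
and using $X(|Du|^2)=2g(\nabla_X Du,Du)=2\,\nabla^2 u(X,Du)$ (by definition of the Hessian) this becomes $\nabla^2 u(X,Du)+\tfrac{n}{2}\dot h\,|Du|^2$. For the second piece,
$$
{\rm div}\bigl(g(Du,X)\,Du\bigr)=Du\bigl(g(Du,X)\bigr)+g(Du,X)\,\Delta u,
$$
and $Du(g(Du,X))=g(\nabla_{Du}Du,X)+g(Du,\nabla_{Du}X)=\nabla^2 u(Du,X)+\dot h\,|Du|^2$, having used $\nabla_{Du}X=\dot h\,Du$.

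Subtracting, the key cancellation is between $\nabla^2 u(X,Du)$ and $\nabla^2 u(Du,X)$, which agree by symmetry of the Hessian. What remains is
$$
{\rm div}\,V=\Bigl(\tfrac{n}{2}-1\Bigr)\dot h\,|Du|^2-h u_r\,\Delta u=\tfrac{n-2}{2}\dot h\,|Du|^2-h u_r\,\Delta u,
$$
which is exactly \eqref{identity1}.

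I do not expect serious obstacles here: the computation is essentially forced once one identifies $X$ as the gradient of $H$ and uses $\nabla^2 H=\dot h g$. The only point that requires a little care is making sure the two cross terms involving $\nabla^2 u$ indeed appear with the same arguments so that they cancel; writing $h u_r$ intrinsically as $g(Du,X)$ (rather than keeping it in coordinates) is what makes this symmetry manifest. As a sanity check, in the Euclidean case $h(r)=r$, $\dot h=1$, $X=r\partial_r=x$, and the identity reduces to the classical Pohožaev identity \eqref{Pohozaev} after integrating over $\Omega$ and applying the divergence theorem.
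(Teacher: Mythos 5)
Your computation is correct, and it takes a genuinely cleaner route than the paper's. The paper proves \eqref{identity1} by a fully explicit calculation in the orthonormal frame $e_1=\partial_r$, $e_k=h^{-1}\tilde e_k$: it computes ${\rm div}\,X=n\dot h$ via the Koszul formula and then evaluates $f_r-g(Du_r,Du)$ term by term in that frame, the cancellation emerging from the explicit expression $-\sum_{k\geq 2}h^{-1}\dot h\,(e_ku)^2$. You instead observe that $X=DH$ with $\nabla^2H=\dot h\,g$ (the paper's own identity \eqref{H}), i.e.\ $\nabla_YX=\dot h\,Y$ for every $Y$, and then run an intrinsic product-rule computation in which the two cross terms $\nabla^2u(X,Du)$ and $\nabla^2u(Du,X)$ cancel by symmetry of the Hessian; both ${\rm div}\,X=n\dot h$ and the $\dot h|Du|^2$ terms come for free from $\nabla_YX=\dot h\,Y$. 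What your approach buys is transparency and generality: the argument never uses the warped-product frame beyond \eqref{H}, so the identity holds verbatim (with $\dot h$ replaced by $\phi$) for any gradient field $X=DH$ satisfying $\nabla^2H=\phi\,g$ on an arbitrary Riemannian manifold, which also makes the link with the classical Poho\v{z}aev identity \eqref{Pohozaev} immediate. The paper's frame computation is more pedestrian but self-contained, re-deriving ${\rm div}\,X=n\dot h$ (formula \eqref{divX}) along the way, which it then reuses elsewhere. The only point worth a word in a final write-up is the (harmless) degeneracy of polar coordinates at the origin, but your intrinsic formulation actually handles this better than the paper's, since $X=DH$ and \eqref{H} make sense across $r=0$.
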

\begin{proof}
Let $\{\tilde e_{2},\dots,\tilde e_{n}\}$ be a local orthonormal frame of $S^{n-1}$. Then
$$
e_1=\partial_r\,,\quad e_{k}=\frac{1}{h}\tilde e_k\,,\quad k=2,\dots,n \,,
$$
 is a local orthonormal frame of $B$ with respect to $g$. In order to simplify the notation we set 
$$
f=\frac{|D u|^2}{2}\,,\quad \ell=g(X,D u)=h u_r\,. 
$$
We readily have  
$$
\begin{aligned}
{\rm div} \left(fX-\ell D u\right)&=\,g(D f,X)+f{\rm div X}-g(D \ell,D u)-\ell \Delta u\\ 
&=\,hf_r+f{\rm div X}-u_rg(D h,D u)-hg(D u_r,D u)-\ell \Delta u\\ 
&=\,hf_r+f{\rm div X}-\dot h(u_r)^2-hg(D u_r,D u)-\ell \Delta u\,.
\end{aligned}
$$
Moreover,  
$$
{\rm div X}=\sum_{k=1}^ng(\nabla_{e_k}h\partial_r,e_k)=\dot h+\sum_{k=1}^n hg(\nabla_{e_k}\partial_r,e_k)\,.
$$
Furthermore the Koszul formula implies that for $k\neq 1$ we have 
$$
g(\nabla_{e_k}\partial_r,e_k)=g([e_k,\partial_r],e_k)=g([h^{-1}\tilde e_k,\partial_r], h^{-1}\tilde e_k)=\frac{\dot h}{h}
$$
and so 
\begin{equation}\label{divX}
{\rm div X}= n\dot h\,.
\end{equation}
Now,
$$
\begin{aligned}
D u_r=&\,\sum_{k=1}^n e_k(\partial_ru)e_k=u_{rr}+\sum_{k=2}^{n} h^{-1}\tilde e_k(\partial_ru)e_k=
u_{rr}\partial_r+\sum_{k=2}^{n} h^{-1} \partial_r(\tilde e_ku)e_k\\
=&\,\sum_{k=1}^{n}  \partial_r(e_ku)e_k+\sum_{k=2}^{n}  h^{-1}\dot h( e_ku)e_k
\end{aligned}
$$
and 
$$
f_r=\frac12\,\partial_rg(D u,D u)=\sum_{k=1}^n\partial_r(e_k(u))e_k(u)\,,
$$
imply that 
$$
f_r-g(D u_r,D u)=-\sum_{k=2}^{n}  h^{-1}\dot h( e_ku)e_k(u)\,.
$$
So 
$$
{\rm div} \left(fX-\ell D u\right)
=nf\dot h-\dot h|D u|^2-\ell \Delta u=\frac{n-2}{2}\dot h|D u|^2-h\partial_r u\Delta u\,,
$$
i.e. 
$$
{\rm div} \left(\frac{|D u|_g^2}{2}X-hu_r D u\right)=\frac{n-2}{2}\dot h|D u|^2-hu_r\Delta u\,,
$$
as required. 
\end{proof}

\section{General results on rotational symmetric spaces}\label{section3}
In this section we focus on problem \eqref{pb_serrin_h} on rotationally symmetric spaces. The main result of this section is the following proposition. 
\begin{proposition}\label{pre1}
Let $(B,g=dr^2+h^2\,g_{S^{n-1}})$ be a rotationally symmetric space and let $\Omega$ be a bounded domain whose closure is contained in $B$.  Then every $C^2$-solution $u$ to 
\begin{equation} \label{pb_serrin2}
\begin{cases}
\Delta u = n \dot h & \textmd{in } \Omega \,, \\
u = 0 & \textmd{on } \partial \Omega \,, \\
\partial_{\nu}u = c & \textmd{on } \partial \Omega \,,
\end{cases}
\end{equation}
where $c$ is a positive constant, satisfies 
\begin{equation}\label{BW2}
 \frac12\int_\Omega(-u)\Delta|Du|^2\geq n\int_\Omega-u\dot h^2+n\int_\Omega-uu_r\ddot h+\int_\Omega -uRic(Du,Du)\,,
\end{equation}
and the equality sign holds if and only if 
$$
\nabla^2 u=\nabla^2(H)\,,
$$
where $H$ is a primitive of $h$.
Furthermore 
\begin{equation}
\label{left2}
 \frac12\int_\Omega(-u)\Delta|Du|^2=n\int_{\Omega}-u\dot h^2+n\int_{\Omega}-uh\ddot h-(n-1)\int_{\Omega}-uu_r\ddot h \,.
\end{equation}
\end{proposition}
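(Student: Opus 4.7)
Both assertions rest on the Bochner--Weitzenb\"ock formula \eqref{BW} applied to $u$. Since $\Delta u = n\dot h$ depends only on the radial coordinate, one has $g(D\Delta u,Du) = n u_r \ddot h$, and the maximum principle (using $c\ne 0$) forces $u$ to have constant sign in $\Omega$, so $-u\ge 0$ is a legitimate nonnegative weight.

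For the inequality \eqref{BW2} I would multiply \eqref{BW} by $(-u)$, integrate over $\Omega$, and invoke the pointwise estimate
$$|\nabla^2 u|^2 \ge \tfrac{1}{n}(\Delta u)^2 = n\dot h^2,$$
which is just the Cauchy--Schwarz inequality applied to the eigenvalues of the symmetric tensor $\nabla^2 u$. Equality in it is equivalent to $\nabla^2 u$ being a pointwise multiple of $g$, and the trace constraint $\mathrm{tr}\,\nabla^2 u = \Delta u = n\dot h$ pins that multiple to be $\dot h$; by \eqref{H} this says $\nabla^2 u = \nabla^2 H$, as asserted.

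For the identity \eqref{left2} the strategy is instead to compute $\tfrac12\int(-u)\Delta|Du|^2$ purely by integration by parts, bypassing Bochner entirely. Using $u\equiv 0$ on $\partial\Omega$ together with the identity $D|Du|^2 = 2\nabla^2 u(Du,\cdot)$, one integration by parts yields
$$\tfrac12\int_\Omega (-u)\Delta|Du|^2 = \int_\Omega \nabla^2 u(Du,Du),$$
and the Leibniz identity $\mathrm{div}(|Du|^2 Du) = 2\nabla^2 u(Du,Du)+|Du|^2\Delta u$, combined with $Du = c\nu$ on $\partial\Omega$, converts the right-hand side into $\tfrac12 c^3|\partial\Omega| - \tfrac{n}{2}\int |Du|^2\dot h$. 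The remaining work is to re-express these two terms via four auxiliary identities, each obtained by integrating a divergence whose boundary flux vanishes because $u=0$ on $\partial\Omega$: the choices $(-u)\dot h\, Du$ and $(-u)h\dot h\,\partial_r$ yield respectively $\int|Du|^2\dot h = n\int(-u)\dot h^2+\int(-u)u_r\ddot h$ and $\int h\dot h\, u_r = n\int(-u)\dot h^2+\int(-u)h\ddot h$; the divergence theorem applied to $Du$ itself gives $c|\partial\Omega| = n\int\dot h$; and the Poho\v{z}aev identity \eqref{identity1} integrated over $\Omega$, with its boundary flux computed from $u=0$, $Du=c\nu$, and the divergence theorem for $X = h\partial_r$, yields
$$-\tfrac{nc^2}{2}\int\dot h = \tfrac{n-2}{2}\int\dot h|Du|^2 - n\int hu_r\dot h.$$
Substituting these four identities into the formula for $\tfrac12\int(-u)\Delta|Du|^2$ and collecting terms, the coefficients of $\int(-u)\dot h^2$, $\int(-u)h\ddot h$, and $\int(-u)u_r\ddot h$ collapse to $n$, $n$, and $-(n-1)$, which is exactly \eqref{left2}.

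The main obstacle is bookkeeping: one must verify that the boundary contribution $c^3|\partial\Omega|$ produced by the Leibniz step is eliminated by combining the Poho\v{z}aev relation with $c|\partial\Omega| = n\int\dot h$, and that all remaining coefficients align. No conceptual input beyond the Poho\v{z}aev identity of Section \ref{section2} and the above elementary inequality is required.
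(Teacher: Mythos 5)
Your proposal is correct and takes essentially the same route as the paper: Bochner--Weitzenb\"ock combined with Newton's inequality (and the sign $u\le 0$, which really follows from $\Delta u=n\dot h>0$ and $u=0$ on $\partial\Omega$ rather than from $c\neq 0$) gives \eqref{BW2} and its equality case $\nabla^2u=\dot h\,g=\nabla^2H$. For \eqref{left2} your four auxiliary identities are exactly the paper's: your field $(-u)h\dot h\,\partial_r$ is $-u\,D(h^2/2)$, the flux identity is \eqref{17}, and the integrated Poho\v{z}aev identity \eqref{identity1} is used in the same way, so the coefficients indeed collapse to $n$, $n$, $-(n-1)$.
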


%

\begin{proof}
Inequality \eqref{BW2} can be easily obtained by combining the Bochner-Weitzenb\"ock formula and the Newton inequality. Indeded,  by applying \eqref{BW} to a solution of \eqref{pb_serrin2} and taking into account the Newton inequality 
\begin{equation}\label{newton}
n|\nabla^2u|^2\geq (\Delta u)^2\,,
\end{equation} 
we get 
\begin{equation}\label{BW}
\frac12\Delta|Du|^2_g\geq\frac1n(\Delta u)^2+ n u_r\ddot h+ Ric(Du,Du)
\end{equation}
pointwise. Since $\dot h >0$, then $u\leq 0$ in $\bar \Omega$ and from $\Delta u=n\dot h$ we obtain \eqref{BW2}.

Furthermore, if the equality sign in \eqref{BW2} holds, then 
$$
\int_\Omega -u \left(|\nabla^2 u|^2 - \frac{1}{n}(\Delta u )^2\right) = 0 \,,
$$
which implies that the equality sign 
holds in Newton's inequality \eqref{newton} and, since $\Delta u=n\dot h$, we obtain 
$$
\nabla^2 u=\dot h\,g\,.
$$
Moreover, if $H$ is a primitive of $h$, then formula \eqref{H} implies 
$$
\nabla^2u=\nabla^2H\,,
$$ 
as required. 

The proof of \eqref{left2} is more involved. 
Since 
$$
{\rm div }(-u\,D|Du|^2)=-g(Du,D|Du|^2)-u\Delta |Du|^2
$$
and $u$ vanishes on $\partial \Omega$,  the divergence theorem implies 
$$
\int_\Omega(-u)\Delta|Du|^2=\int_{\Omega} g(Du,D|Du|^2)\,.
$$
Moreover,  
$$
{\rm div }(|Du|^2Du)=g(Du,D|Du|^2)+|Du|^2\Delta u=g(Du,D|Du|^2)+n\dot h|Du|^2
$$
and so, keeping in mind that $Du=c\nu$, we have 
$$
\begin{aligned}
 \frac12\int_\Omega(-u)\Delta|Du|^2=&\, \frac12\int_{\Omega} {\rm div }(|Du|^2Du)- \frac n2\int_\Omega \dot h|Du|^2\\
=&\, \frac12\int_{\partial \Omega} |Du|^2g(Du,\nu)- \frac n2\int_\Omega \dot h|Du|^2\\
=&\, \frac{c^3}2|\partial \Omega|- \frac n2\int_\Omega \dot h|Du|^2\,,
\end{aligned}
$$
i.e., 
\begin{equation}
\label{left}
 \frac12\int_\Omega(-u)\Delta|Du|^2=\frac{c^3}2|\partial \Omega|- \frac n2\int_\Omega \dot h|Du|^2\,.
\end{equation}
Since $u=0$ on $\partial \Omega$,  we have
$$
\int_\Omega \dot h |Du|^2=\int_{\Omega}{\rm div} (\dot huDu)-\int_{\Omega}\dot hu\Delta u-\int_{\Omega}ug(D\dot h,Du)=n\int_{\Omega}-u\dot h^2+\int_{\Omega} -uu_r\ddot h\,,
$$
i.e.
\begin{equation}\label{20}
\int_\Omega \dot h |Du|^2=n\int_{\Omega}-u\dot h^2+\int_{\Omega} -uu_r\ddot h\,.
\end{equation}

In order to treat $|\partial\Omega|$ in \eqref{left} and in \eqref{BW2} we need to show some preliminary formulas.  
We first notice that since $u$ satisfies \eqref{pb_serrin2} then
$$
|\partial \Omega|=\int_{\partial \Omega}\frac{1}{c}\partial_{\nu}u=\frac{1}{c}\int_{\Omega}\Delta u=\frac{n}{c}\int_{\Omega}\dot h\,,
$$
which implies 
\begin{equation} \label{17} 
|\partial \Omega|=\frac{n}{c}\int_{\Omega}\dot h \,.
\end{equation}

Furthermore we need to show that 
\begin{equation}\label{18}
c^2\int_{\Omega}\dot h=\int_{\Omega}-u\left((n+2)\dot h^2+2h\ddot h\right)-\frac{n-2}{n}\int_{\Omega}-uu_r\ddot h\,.
\end{equation}
To prove \eqref{18}, we integrate \eqref{identity1} and obtain  
$$
\int_\Omega{\rm div} \left(\frac{|D u|^2}{2}X-hu_r D u\right)=\frac{n-2}{2}\int_{\Omega}\dot h|D u|^2-\int_{\Omega}hu_r\Delta u \,.
$$
Since $\nu=\frac{1}{c}Du$ on  $\partial \Omega$, we have
$$
\begin{aligned}
\int_\Omega{\rm div} \left(\frac{|D u|^2}{2}X-hu_r D u\right)=&\,\frac{c^2}{2} \int_{\partial\Omega}g(X,\nu)-c\int_{\partial \Omega} hu_r=
\frac{c^2}{2} \int_{\partial\Omega}g(X,\nu)-c^2\int_{\partial\Omega}g(X,\nu)\\
=&\,-\frac{c^2}{2}  \int_{\partial\Omega}g(X,\nu)=-\frac{c^2}{2}\int_{\Omega}{\rm div} X=-\frac{nc^2}{2}\int_{\Omega}\dot h \,,
\end{aligned}
$$
and equation \eqref{20} implies 
\begin{equation}\label{x}
-\frac{c^2n}{2}\int_{\Omega}\dot h=\frac{n-2}{2}\int_{\Omega}\left(-un\dot h^2-uu_r\ddot h\right)-n\int_{\Omega}
u_rh\dot h\,.
\end{equation}
Since 
\begin{eqnarray*}
&& u_rh\dot h=g(Du,D(h^2/2))\,;\\
&& {\rm div}(uD( h^2/2))=g(Du,D( h^2/2))+u\Delta (h^2/2)\,;\\
&& \Delta (h^2/2)=n\dot h^2+h\ddot h\,,
\end{eqnarray*}
we have, 
$$
\int_\Omega u_rh\dot h=\int_{\Omega}{\rm div}\left( uD(h^2/2)\right)-\int_{\Omega}u\Delta (h^2/2)=\int_{\Omega} -u\left(
n\dot h^2+h\ddot h\right) \,.
$$
Hence equation \eqref{x} implies 
$$
-\frac{c^2n}{2}\int_{\Omega}\dot h=\frac{n-2}{2}\int_{\Omega}\left(-un\dot h^2-uu_r\ddot h\right)-n\int_{\Omega} -u\left(
n\dot h^2+h\ddot h\right)\,
$$
which is equivalent to \eqref{18}. 

Combining \eqref{17} and \eqref{18}, we get 
$$
\frac{c^3}2|\partial \Omega|=\frac{c^2n}{2}\int_{\Omega}\dot h=
\frac n2\int_{\Omega}-u\left((n+2)\dot h^2+2h\ddot h\right)-\frac{n-2}{2}\int_{\Omega}-uu_r\ddot h
$$
and then, by taking into account \eqref{left} and \eqref{20}, we get \eqref{left2}.
\end{proof}

Proposition \ref{pre1} readily implies Serrin's theorem for problem \eqref{pb_serrin1} in the Euclidean space. Namely,

\begin{corollary}\label{corol_Euclide}
Let $\Omega\subseteq \R^n$ be a bounded domain in the Euclidean space and assume that problem \ref{pb_serrin1} has a solution in $\Omega$. Then $\Omega$ is a ball and $u$ is radial. 
\end{corollary}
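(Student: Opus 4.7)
The plan is to apply Proposition \ref{pre1} to the Euclidean model, i.e.\ the rotationally symmetric space with warping function $h(r)=r$. In this case $\dot h\equiv 1$, $\ddot h\equiv 0$, the Ricci tensor vanishes identically, and $H(r)=\tfrac12 r^2$ is a primitive of $h$ whose Hessian on $\R^n$ is the identity, so that $\nabla^2 H=g$. The torsion problem in the Euclidean space is exactly \eqref{pb_serrin_h} with this choice of $h$ (so that $n\dot h\equiv n$).

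First I would specialize the two outputs of Proposition \ref{pre1}. Because $\ddot h\equiv 0$ and $Ric\equiv 0$, inequality \eqref{BW2} collapses to
$$
\tfrac12\int_\Omega(-u)\Delta|Du|^2 \geq n\int_\Omega(-u),
$$
while identity \eqref{left2} becomes, for the same reason,
$$
\tfrac12\int_\Omega(-u)\Delta|Du|^2 = n\int_\Omega(-u).
$$
Comparing the two, the inequality is in fact an equality; therefore the rigidity clause of Proposition \ref{pre1} forces $\nabla^2 u=\nabla^2 H=\mathrm{Id}$ on $\Omega$. Integrating twice, $u(x)=\tfrac12|x-x_0|^2+C$ for some $x_0\in\R^n$ and $C\in\R$. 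The Dirichlet condition $u=0$ on $\partial\Omega$ then reads $|x-x_0|^2=-2C$ for every $x\in\partial\Omega$, showing that $\partial\Omega$ is a sphere centered at $x_0$. Hence $\Omega$ is a ball about $x_0$ and $u$ is radial with respect to $x_0$.

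Since Proposition \ref{pre1} already carries out all the analytic work, no genuine obstacle remains; the only point to notice is that the Euclidean case is degenerate enough ($\ddot h=0$ and $Ric=0$) that the lower bound in \eqref{BW2} and the explicit right-hand side of \eqref{left2} coincide automatically, thereby triggering the rigidity alternative. This is precisely the simplified version of the argument in \cite{BNST} alluded to in the introduction.
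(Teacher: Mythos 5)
Your proposal is correct and follows the paper's own argument: in the Euclidean case $\dot h\equiv 1$, $\ddot h\equiv 0$, $Ric\equiv 0$, so \eqref{BW2} and \eqref{left2} coincide, the equality case of Proposition \ref{pre1} gives $\nabla^2u=\mathrm{Id}$, and the Dirichlet condition then forces $\Omega$ to be a ball with $u$ radial. The only difference is that you spell out the integration of $\nabla^2u=\mathrm{Id}$ and the use of $u=0$ on $\partial\Omega$ slightly more explicitly than the paper does.
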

\begin{proof}
In the Euclidean case, i.e. when $h(r)=r$ and $B=\R^n$, equations \eqref{BW2} and \eqref{left2} agree. So in particular the  
equality sign in \eqref{BW2} holds and $\nabla^2 u= Id \, $ which, together with the condition $u=0$ on $\partial \Omega$, implies that $u=|x-x_0|^2 - R^2$ and $\Omega=B_R(x_0)$, for some $x_0\in \mathbb{R}^n$ and $R>0$.
\end{proof}

In contrast to the Euclidean case, theorem \ref{main2} does not directly follow from proposition \ref{pre1}, and we need to prove the next proposition. 


\begin{proposition}\label{lemma_pino}
Under the same assumptions of  proposition \ref{pre1}, we have
\begin{eqnarray}
&&\label{quelo} \int_{\Omega} -u|Du|^2=\int_{\Omega} -uhu_r \,, \\
&&\label{seconda} \int_{\Omega}-u\left(|Du|^2-h^2\right)\leq 0\,. 
\end{eqnarray}
\end{proposition}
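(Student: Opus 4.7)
The plan is to prove \eqref{quelo} by testing the equation $\Delta u=n\dot h$ against $u$ (via two different divergence identities) and exploiting $u|_{\partial\Omega}=0$ to eliminate the boundary terms; then \eqref{seconda} follows from \eqref{quelo} by a Cauchy--Schwarz inequality weighted by $-u$, which is nonnegative in $\Omega$ by the maximum principle applied to $\Delta u=n\dot h>0$.

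For \eqref{quelo}, I would compute two divergences. First,
$$
{\rm div}(u^2\,Du)=2u\,|Du|^2+u^2\Delta u=2u\,|Du|^2+n\,u^2\dot h;
$$
integrating over $\Omega$ and using $u|_{\partial\Omega}=0$ to kill the boundary integral gives $\int_\Omega -u\,|Du|^2=\tfrac{n}{2}\int_\Omega u^2\dot h$. Second, with $X=h\partial_r$ and using ${\rm div}\,X=n\dot h$ from \eqref{divX} together with $g(Du,X)=hu_r$,
$$
{\rm div}(u^2X)=2u\,hu_r+n\,u^2\dot h;
$$
the same argument produces $\int_\Omega -u\,hu_r=\tfrac{n}{2}\int_\Omega u^2\dot h$. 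Comparing the two expressions yields \eqref{quelo}.

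For \eqref{seconda}, the weighted Cauchy--Schwarz inequality with weight $-u\geq 0$ applied to the pair $(h,u_r)$ gives
$$
\int_\Omega -u\,hu_r\leq\Bigl(\int_\Omega -u\,h^2\Bigr)^{1/2}\Bigl(\int_\Omega -u\,u_r^2\Bigr)^{1/2}\leq\Bigl(\int_\Omega -u\,h^2\Bigr)^{1/2}\Bigl(\int_\Omega -u\,|Du|^2\Bigr)^{1/2},
$$
where the second inequality uses the pointwise bound $u_r^2\leq |Du|^2$. Substituting $\int_\Omega -u\,hu_r=\int_\Omega -u\,|Du|^2$ from \eqref{quelo} on the left, and dividing by $\bigl(\int_\Omega -u\,|Du|^2\bigr)^{1/2}$ when this quantity is nonzero (the zero case being trivial since the right-hand side is nonnegative), one obtains $\int_\Omega -u\,|Du|^2\leq\int_\Omega -u\,h^2$, which is exactly \eqref{seconda}.

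I do not expect any serious obstacle: the only mildly clever point is testing the equation against the multiplier $u^2$ in the divergence identities, rather than against a constant or against $u_r$, which is precisely what makes both boundary integrals vanish without recourse to the Neumann condition $\partial_\nu u=c$. In fact, the constant $c$ plays no role in Proposition \ref{lemma_pino}, which is why the argument reduces cleanly to the two computations above.
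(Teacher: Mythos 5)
Your proposal is correct and follows essentially the same route as the paper: \eqref{quelo} is obtained by integrating divergence identities with the multiplier $u^2$ and using $u=0$ on $\partial\Omega$ to kill the boundary terms, exactly as in the paper's proof. The only cosmetic difference is in \eqref{seconda}, where you apply an integral weighted Cauchy--Schwarz inequality, while the paper uses the pointwise bound $-u\,h u_r\le \tfrac12(-u)h^2+\tfrac12(-u)u_r^2$ followed by $u_r^2\le |Du|^2$; both give the same conclusion.
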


\begin{proof}
Let $u$ be a $C^2$-solution to \eqref{pb_serrin2}, then \eqref{divX} implies  
$$
{\rm div}\left(\frac{u^2}{2} h\,\partial_r\right)=uu_rh+n\frac{u^2}{2}\dot h
$$
and, by taking into account that $u$ vanishes on $\partial \Omega$, we get  
$$
\int_{\Omega} \frac{u^2}{2}n\dot h=\int_{\Omega} -uhu_r \,;
$$
hence
\begin{equation*}
\int_{\Omega} -u|Du|^2=\int_{\Omega} \frac{u^2}{2}\Delta u=\int_{\Omega} \frac{u^2}{2}n\dot h=\int_{\Omega} -uhu_r\,,
\end{equation*}
which gives \eqref{quelo}. Moreover,
\begin{equation*}
\int_{\Omega} -u|Du|^2=\int_{\Omega} -uhu_r\,\leq \frac12 \int_\Omega -uh^2+\frac12 \int_\Omega -uu_r^2\leq 
 \frac12 \int_\Omega -uh^2+\frac12 \int_\Omega -u|Du|^2\,,
\end{equation*}
where in the last step we have used $u_r^2\leq |Du|^2$,
which implies 
$$
\int_{\Omega}-u\left(|Du|^2-h^2\right)\leq 0\,,
$$  
as required. 
\end{proof}

\section{Proof of theorem \ref{main2}}
In this section we prove theorem $\ref{main2}$ and give some remark.

\begin{proof}[Proof of theorem $\ref{main2}$]
The hemisphere $(S^{n}_+,g_{S^{n}})$ can be regarded as the open ball $B$ of $\R^n$ of radius $\pi/2$ equipped with the rotationally symmetric metric 
$$
g=dr^2+h^2\,g_{S^{n-1}}\,,
$$ 
where $h(r)=\sin (r)$. Problem \eqref{pb_serrin} writes in this setting as \eqref{pb_serrin2}. 
 
Since 
$$
\ddot h=-h\,,\quad Ric(Du,Du)=(n-1)|Du|^2\,,
$$
by using \eqref{left2} and \eqref{quelo}, we get that in this case \eqref{BW2} is equivalent to 
$$
\int_\Omega -u|Du|^2 - \int_{\Omega}-uh^2 \geq 0 \,,
$$
which is the reverse inequality of \eqref{seconda}.  Therefore the equality sign in \eqref{BW2} holds and proposition \ref{pre1} implies that   
$$
D^{2}(u-H)=0\,,
$$
where $H$ is a primitive of $h$. 
It follows that $u-H$ is constant and so $u$ is a primitive of $h$. In particular $u$ is a radial function and $\Omega$ is a ball centered at $O$.   
\end{proof}

\begin{remark}{\rm 
It is quite natural to ask if an analogue of theorem \ref{main2} can be proven on the Hyperbolic space. Unfortunately, our argument fails in the hyperbolic space since \eqref{BW2} reduces to \eqref{seconda}. }
\end{remark}

\end{document}